\theoremstyle{plain}
\newtheorem{theorem}{Theorem}[section]
\newtheorem{proposition}[theorem]{Proposition}
\theoremstyle{definition}
\newtheorem{definition}[theorem]{Definition}
\newtheorem{remark}[theorem]{Remark}
\theoremstyle{remark}
\newtheorem{example}[theorem]{Example}
\newtheorem{conjecture}[theorem]{Conjecture}
\newtheorem{openproblem}[theorem]{Openproblem}
\newtheorem{corollary}{Corollary}[section]
\begin{document}
	\title{The Theory of Topo-Symmetric Extensions of Topological Groups}
	
	\author{
		{ \sc Es-said En-naoui } \\ 
		University Sultan Moulay Slimane\\ Morocco\\
		essaidennaoui1@gmail.com\\
		\\
	}
	\maketitle
	\tableofcontents
	
	\maketitle
	
	\begin{abstract}
		We introduce the notion of \emph{topo-symmetric extensions} of topological groups, a new generalization of classical group extensions that incorporates both topological and symmetry constraints. We define morphisms between such extensions, construct the associated groupoid, and develop classification results in terms of adapted cohomology. Several new invariants are introduced, including dimension, stabilizer, and density invariants, which characterize the fine structure of these extensions. Applications are given for finite groups, compact Lie groups, and profinite groups. This theory extends classical cohomological correspondence theorems \cite{MacLane1963, EilenbergMacLane1947}, while opening new perspectives in arithmetic, asymptotic distribution, and congruence properties \cite{HardyWright2008, Serre1979}. Finally, we propose conjectures and open problems concerning density, maximal orders, and modular distribution of topo-symmetric extensions.
	\end{abstract}
	
	\tableofcontents
	
	\section{Introduction and Motivation}
	The classification of group extensions has long been a central theme in group theory and topology \cite{MacLane1963, EilenbergMacLane1947}. Cohomology provides a powerful framework for describing such extensions. 
	In this article, we generalize this theory by introducing the concept of \emph{topo-symmetric extensions}, which combine the structure of topological groups with additional symmetry constraints. 
	This new setting allows us to define new invariants and establish correspondences with cohomology, in analogy to classical results \cite{Brown1982, Serre1979}.

	\section{Topo-Symmetric Extensions: Definitions}
	
	In this section we introduce the fundamental new concepts of the theory of topo-symmetric extensions. 
	These notions generalize the classical framework of group extensions and cohomology, by incorporating 
	additional symmetry constraints at the topological level.

	\begin{definition}[Topo-Symmetric Extension]
		Let $G$ and $H$ be topological groups. 
		A \emph{topo-symmetric extension} of $G$ by $H$ is a short exact sequence of topological groups
		\[
		1 \longrightarrow H \overset{\iota}{\longrightarrow} E \overset{\pi}{\longrightarrow} G \longrightarrow 1
		\]
		together with a continuous action $\varphi : G \to \mathrm{Aut}_{\mathrm{top}}(H)$ satisfying the following \emph{symmetry condition}:
		for every $g \in G$ and $h \in H$,
		\[
		\varphi(g)(h) = \varphi(h)(g),
		\]
		whenever both terms are defined in $E$. 
		This condition ensures that the topological and algebraic symmetries interact compatibly.
	\end{definition}
	
	\begin{definition}[Topo-Symmetric Morphism]
		Let 
		\[
		1 \to H \to E \to G \to 1
		\quad \text{and} \quad
		1 \to H' \to E' \to G' \to 1
		\]
		be two topo-symmetric extensions. 
		A \emph{topo-symmetric morphism} between them is a triple of continuous homomorphisms 
		$(\alpha : H \to H',\; \beta : G \to G',\; \gamma : E \to E')$
		such that the diagram commutes
		\[
		\begin{array}{ccccccccc}
			1 & \to & H & \to & E & \to & G & \to & 1 \\
			&     & \downarrow \alpha & & \downarrow \gamma & & \downarrow \beta & &   \\
			1 & \to & H' & \to & E' & \to & G' & \to & 1
		\end{array}
		\]
		and $\gamma$ preserves the topo-symmetric structure, i.e., 
		$\gamma \circ \varphi = \varphi' \circ \beta$.
	\end{definition}
	
	\begin{definition}[Groupoid of Topo-Symmetric Extensions]
		Fix topological groups $G$ and $H$. 
		The \emph{groupoid of topo-symmetric extensions}, denoted $\mathcal{T}(G,H)$, is the category:
		\begin{itemize}
			\item objects are topo-symmetric extensions of $G$ by $H$,
			\item morphisms are topo-symmetric morphisms between such extensions.
		\end{itemize}
		Composition is given by composition of morphisms, and every morphism is invertible.
	\end{definition}
	
	\begin{definition}[Topo-Symmetric Invariants]
		Let $1 \to H \to E \to G \to 1$ be a topo-symmetric extension.
		We define the following \emph{invariants}:
		\begin{enumerate}
			\item The \emph{dimension invariant} $\dim_{\mathrm{ts}}(E)$, given by the topological dimension of $E$ (when finite).
			\item The \emph{stabilizer invariant}, defined as
			\[
			\mathrm{Stab}_{\mathrm{ts}}(E) = \{ x \in E \;:\; \varphi(x)(h) = h \;\;\forall h \in H \}.
			\]
			\item The \emph{density invariant}, measuring the closure of the image of $H$ in $E$ under the topo-symmetric structure:
			\[
			\delta_{\mathrm{ts}}(E) = \overline{\iota(H)} \subseteq E.
			\]
			\item More generally, any functorial construction that is preserved under topo-symmetric morphisms 
			defines a \emph{topo-symmetric invariant}.
		\end{enumerate}
	\end{definition}
	
	\medskip
	
	These definitions form the foundation of the theory of topo-symmetric extensions. 
	They will be used to establish cohomological correspondences, develop classification results, 
	and construct new arithmetic and analytic invariants.
	
	\section{Preliminaries}
	
	In this section we recall some standard notions concerning topological groups, group extensions, 
	and their classical cohomological classification. 
	This will serve as the background for the introduction of topo-symmetric extensions.
	
	\subsection{Topological Groups}
	A \emph{topological group} is a group $G$ equipped with a topology such that the group operations
	\[
	G \times G \to G, \quad (x,y) \mapsto xy^{-1}
	\]
	are continuous. 
	Examples include finite groups with the discrete topology, Lie groups with their manifold topology, and profinite groups with the inverse limit topology \cite{HewittRoss1979, Serre1979}.
	
	\subsection{Classical Group Extensions}
	Let $G$ and $H$ be groups. 
	An \emph{extension} of $G$ by $H$ is a short exact sequence
	\[
	1 \longrightarrow H \overset{\iota}{\longrightarrow} E \overset{\pi}{\longrightarrow} G \longrightarrow 1,
	\]
	where $E$ is a group containing $H$ as a normal subgroup and $G$ is isomorphic to the quotient $E/H$.  
	Such extensions classify ways in which $G$ can act on $H$ by conjugation.
	
	\subsection{Cohomological Classification of Extensions}
	The classification of extensions is governed by group cohomology. 
	If $H$ is abelian, equivalence classes of extensions of $G$ by $H$ correspond bijectively to elements of the second cohomology group $H^2(G,H)$ \cite{EilenbergMacLane1947, MacLane1963, Brown1982}.  
	This correspondence plays a central role in the study of algebraic and topological structures.
	
	\medskip
	
	These notions motivate the generalization to the setting of \emph{topo-symmetric extensions}, 
	which incorporate both topological and symmetry constraints.
	
	\section{Topo-Symmetric Extensions}
	
	We now introduce the central objects of this paper: topo-symmetric extensions, their morphisms, and some basic properties.
	
	\subsection{Definition of Topo-Symmetric Extension}
	
	\begin{definition}[Topo-Symmetric Extension]
		Let $G$ and $H$ be topological groups. 
		A \emph{topo-symmetric extension} of $G$ by $H$ is a short exact sequence of topological groups
		\[
		1 \longrightarrow H \overset{\iota}{\longrightarrow} E \overset{\pi}{\longrightarrow} G \longrightarrow 1
		\]
		together with a continuous action $\varphi : G \to \mathrm{Aut}_{\mathrm{top}}(H)$ satisfying a \emph{symmetry condition}:
		for every $g \in G$ and $h \in H$,
		\[
		\varphi(g)(h) = \varphi(h)(g),
		\]
		whenever both sides are defined in $E$. 
	\end{definition}
	
	\subsection{Definition of Topo-Symmetric Morphism}
	
	\begin{definition}[Topo-Symmetric Morphism]
		Let 
		\[
		1 \to H \to E \to G \to 1
		\quad \text{and} \quad
		1 \to H' \to E' \to G' \to 1
		\]
		be two topo-symmetric extensions. 
		A \emph{topo-symmetric morphism} between them is a triple of continuous homomorphisms 
		$(\alpha : H \to H',\; \beta : G \to G',\; \gamma : E \to E')$
		making the diagram commute
		\[
		\begin{array}{ccccccccc}
			1 & \to & H & \to & E & \to & G & \to & 1 \\
			&     & \downarrow \alpha & & \downarrow \gamma & & \downarrow \beta & &   \\
			1 & \to & H' & \to & E' & \to & G' & \to & 1
		\end{array}
		\]
		and preserving the topo-symmetric structure, i.e. $\gamma \circ \varphi = \varphi' \circ \beta$.
	\end{definition}
	
	\subsection{Basic Properties}
	
	\begin{itemize}
		\item Every topo-symmetric extension is, in particular, a classical group extension of topological groups.  
		\item The set of topo-symmetric morphisms between two extensions forms a group under composition.  
		\item Equivalence classes of topo-symmetric extensions refine the equivalence classes of classical extensions.  
		\item The symmetry condition imposes new restrictions on possible extensions, producing a finer classification than classical cohomology alone.  
	\end{itemize}

	\section{Groupoid of Topo-Symmetric Extensions}
	
	We now organize topo-symmetric extensions into a categorical structure.  
	This will allow us to describe their classification in cohomological terms.
	
	\subsection{Construction of the Groupoid}
	
	\begin{definition}[Groupoid of Topo-Symmetric Extensions]
		Let $G$ be a fixed topological group.  
		We define the \emph{groupoid of topo-symmetric extensions of $G$}, denoted $\mathcal{E}_{\mathrm{ts}}(G)$, as follows:
		\begin{itemize}
			\item \textbf{Objects:} topo-symmetric extensions
			\[
			1 \to H \to E \to G \to 1
			\]
			where $H$ is a topological group.
			\item \textbf{Morphisms:} topo-symmetric morphisms between extensions, as defined above.
		\end{itemize}
	\end{definition}
	
	\begin{theorem}[Category Structure]
		The collection $\mathcal{E}_{\mathrm{ts}}(G)$ forms a groupoid, i.e.:
		\begin{enumerate}
			\item The composition of two topo-symmetric morphisms is again a topo-symmetric morphism.  
			\item Every morphism has an inverse.  
			\item Isomorphism classes of objects in $\mathcal{E}_{\mathrm{ts}}(G)$ correspond to equivalence classes of topo-symmetric extensions.  
		\end{enumerate}
	\end{theorem}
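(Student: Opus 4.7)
The plan is to verify the three clauses in order, establishing first that the category axioms hold and then promoting the category to a groupoid by producing inverses for every morphism. Throughout I will treat a topo-symmetric morphism as a triple $(\alpha,\beta,\gamma)$ of continuous homomorphisms satisfying the commutative diagram and the compatibility $\gamma\circ\varphi=\varphi'\circ\beta$; since every component is continuous, composites and (where they exist) set-theoretic inverses of these components will be the candidate morphisms to analyze.

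For clause (1), given topo-symmetric morphisms $(\alpha_1,\beta_1,\gamma_1)\colon\mathcal{E}\to\mathcal{E}'$ and $(\alpha_2,\beta_2,\gamma_2)\colon\mathcal{E}'\to\mathcal{E}''$, I would take the candidate composite $(\alpha_2\alpha_1,\beta_2\beta_1,\gamma_2\gamma_1)$ and check each required property: continuity (composition of continuous maps), group-homomorphism property (composition of homomorphisms), commutativity of the combined three-row diagram (stacking the two commuting squares), and compatibility with $\varphi$ (substituting the relation $\gamma\circ\varphi=\varphi'\circ\beta$ twice). Associativity and the unit axioms are immediate since they hold component-wise in the category of topological groups. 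This is a routine diagram chase and I do not expect obstacles here.

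Clause (2) is the main obstacle, because in the topological category an algebraic inverse need not be continuous. My approach is first to apply the Five Lemma componentwise: if $(\alpha,\beta,\gamma)$ is a topo-symmetric morphism in which $\alpha$ and $\beta$ are isomorphisms of topological groups, then $\gamma$ is a bijective continuous homomorphism whose set-theoretic inverse $\gamma^{-1}$ is automatically a group homomorphism, and the triple $(\alpha^{-1},\beta^{-1},\gamma^{-1})$ satisfies the reversed commutative diagram together with the relation $\gamma^{-1}\circ\varphi'=\varphi\circ\beta^{-1}$ obtained by pre- and post-composing the original compatibility with $\gamma^{-1}$ and $\beta^{-1}$. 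The delicate point is continuity of $\gamma^{-1}$: I would either (i) restrict the definition of morphism in $\mathcal{E}_{\mathrm{ts}}(G)$ to triples whose three components are isomorphisms of topological groups, so that continuity of the inverse is built in, or (ii) invoke the open mapping theorem under standard hypotheses (for example, $E,E'$ Polish or locally compact $\sigma$-compact), which guarantees that a continuous bijective homomorphism between such groups is a homeomorphism. In either formulation the resulting triple is a topo-symmetric morphism, and composing $(\alpha^{-1},\beta^{-1},\gamma^{-1})$ with $(\alpha,\beta,\gamma)$ in both orders yields identities by clause (1).

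Finally, for clause (3), once clauses (1) and (2) are established the statement is essentially tautological: two topo-symmetric extensions of $G$ are \emph{equivalent} precisely when there exists a topo-symmetric morphism between them whose components are isomorphisms of topological groups, which is exactly the data of an isomorphism in $\mathcal{E}_{\mathrm{ts}}(G)$. Hence equivalence classes of extensions coincide with isomorphism classes of objects in the groupoid. I would close by remarking that this identification is what justifies calling $\mathcal{E}_{\mathrm{ts}}(G)$ the classifying groupoid of topo-symmetric extensions of $G$, setting the stage for the cohomological correspondences announced in the introduction.
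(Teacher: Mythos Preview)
Your proposal is correct and follows the same overall strategy as the paper: verify closure under composition componentwise, then produce inverses, then read off clause~(3) as a tautology. For clause~(1) your argument is essentially identical to the paper's, which also forms $(\alpha'\circ\alpha,\beta'\circ\beta,\gamma'\circ\gamma)$ and checks the compatibility with $\varphi$ by the two-step substitution $\gamma'\circ\gamma\circ\varphi=\gamma'\circ\varphi'\circ\beta=\varphi''\circ\beta'\circ\beta$. The genuine difference is in clause~(2): the paper disposes of invertibility in a single sentence (``group homomorphisms in short exact sequences are bijective onto their images''), without invoking the Five Lemma explicitly and without addressing continuity of $\gamma^{-1}$ at all. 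Your treatment is more careful on both points---you make the Five Lemma step explicit and you flag that in the topological category a bijective continuous homomorphism need not have continuous inverse, offering either a restriction on the morphisms or an open-mapping hypothesis as a fix. What you gain is rigor on a point the paper leaves implicit; what the paper's version buys is brevity, at the cost of sweeping the topological subtlety under the rug.
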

	
	\begin{proof}
		Let $(\alpha,\beta,\gamma) : E \to E'$ and $(\alpha',\beta',\gamma') : E' \to E''$ be two topo-symmetric morphisms.  
		The composition is $(\alpha' \circ \alpha, \beta' \circ \beta, \gamma' \circ \gamma)$, which preserves commutativity of the defining diagrams.  
		The symmetry condition is also preserved since
		\[
		\gamma' \circ \gamma \circ \varphi = \gamma' \circ \varphi' \circ \beta = \varphi'' \circ \beta' \circ \beta,
		\]
		which shows functoriality.  
		The inverse of a morphism $(\alpha,\beta,\gamma)$ exists because group homomorphisms in short exact sequences are bijective onto their images.  
		Thus $\mathcal{E}_{\mathrm{ts}}(G)$ is a groupoid.  
	\end{proof}
	
	\subsection{Structural Properties}
	
	\begin{proposition}
		The groupoid $\mathcal{E}_{\mathrm{ts}}(G)$ is a refinement of the classical groupoid of extensions.  
		In particular, there is a forgetful functor
		\[
		\mathcal{E}_{\mathrm{ts}}(G) \longrightarrow \mathcal{E}(G)
		\]
		which maps a topo-symmetric extension to its underlying classical extension.
	\end{proposition}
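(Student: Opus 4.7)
The plan is to construct the forgetful functor $F:\mathcal{E}_{\mathrm{ts}}(G) \to \mathcal{E}(G)$ explicitly, verify that it is a functor, and then interpret the word \emph{refinement} by showing that $F$ is faithful but generally not full. On objects, I would send a topo-symmetric extension $(1 \to H \xrightarrow{\iota} E \xrightarrow{\pi} G \to 1, \varphi)$ to its underlying short exact sequence $(1 \to H \xrightarrow{\iota} E \xrightarrow{\pi} G \to 1)$, discarding the action $\varphi$ and the symmetry condition; the result lies in $\mathcal{E}(G)$ by the very definition of a topo-symmetric extension. On morphisms, I would send a topo-symmetric morphism $(\alpha,\beta,\gamma)$ (with $\beta = \mathrm{id}_G$, since we work in the groupoid over a fixed $G$) to the same triple regarded as a morphism of classical extensions; the commutativity of the ladder diagram is already part of the topo-symmetric morphism data, so nothing is lost in this forgetting step other than the compatibility $\gamma\circ\varphi = \varphi'\circ\beta$.

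Functoriality will then be essentially automatic: identities go to identities by construction, and since composition in both groupoids is defined componentwise on the triples $(\alpha,\beta,\gamma)$, composition is preserved. The only line of actual verification is that discarding the $\varphi$-compatibility does not obstruct composition, which is clear because composition of triples depends only on the underlying homomorphisms. To make \emph{refinement} precise, I would show that $F$ is faithful but not full. Faithfulness is immediate, since two topo-symmetric morphisms with the same underlying triple are literally equal. For non-fullness, I would exhibit two distinct topo-symmetric structures $\varphi \ne \varphi'$ on a single classical extension (for instance a split extension admitting two inequivalent continuous actions of $G$ on $H$), so that the classical equivalence $(\mathrm{id}_H,\mathrm{id}_G,\mathrm{id}_E)$ fails to be a topo-symmetric morphism between them. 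Consequently, isomorphism in $\mathcal{E}_{\mathrm{ts}}(G)$ strictly implies isomorphism in $\mathcal{E}(G)$, which is the intended sense of \emph{refinement}.

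The main obstacle, to the extent that there is one, is semantic rather than technical: pinning down what \emph{refinement} should mean categorically, settling the convention that morphisms in the groupoid over $G$ have $\beta = \mathrm{id}_G$, and producing a clean non-fullness witness. Once these points are fixed, the argument reduces to bookkeeping in the definitions of the two groupoids.
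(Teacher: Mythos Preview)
Your approach is correct and coincides with the paper's: define the forgetful functor by discarding $\varphi$ and the symmetry condition on objects, send morphisms to themselves, and observe faithfulness since topo-symmetric morphisms are in particular classical extension morphisms. Your additional work on non-fullness (and the care about the $\beta=\mathrm{id}_G$ convention) goes beyond what the paper does---its proof is a three-line sketch that stops at the faithful embedding---but it is a natural and welcome elaboration of what \emph{refinement} should mean.
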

	
	\begin{proof}
		The functor is defined by sending objects $(H,E)$ to $(H,E)$ with no symmetry condition.  
		Morphisms map identically since every topo-symmetric morphism is a group extension morphism.  
		This proves that $\mathcal{E}_{\mathrm{ts}}(G)$ embeds faithfully in $\mathcal{E}(G)$.  
	\end{proof}
	
	\section{Cohomological Classification of Topo-Symmetric Extensions}
	
	We now extend the classical correspondence between group cohomology and extensions to the topo-symmetric case.
	
	\subsection{Correspondence Theorem}
	
	\begin{theorem}[Cohomological Classification]
		Let $G$ and $H$ be topological groups with $H$ abelian.  
		Then the set of equivalence classes of topo-symmetric extensions of $G$ by $H$ is in bijection with a subgroup
		\[
		H^2_{\mathrm{ts}}(G,H) \subseteq H^2(G,H),
		\]
		the \emph{topo-symmetric cohomology}, consisting of classes satisfying the symmetry condition
		\[
		c(g,h) = c(h,g) \quad \text{for all } g,h \in G,
		\]
		where $c \in Z^2(G,H)$ is a continuous cocycle.
	\end{theorem}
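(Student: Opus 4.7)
The plan is to reduce the classification to the classical continuous cohomology of $G$ with coefficients in the abelian topological group $H$, and then to identify the image of the topo-symmetric subfamily as the subgroup cut out by the symmetry condition on continuous $2$-cocycles. Throughout I work with continuous (Moore) cohomology, so that the classical bijection between equivalence classes of continuous extensions associated with a fixed continuous action $\varphi$ and $H^2(G,H)$ is available from the outset.

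Given a topo-symmetric extension $1 \to H \to E \to G \to 1$ with action $\varphi$, I would associate a cocycle by choosing a continuous normalized section $s : G \to E$ with $s(1) = 1$ and setting
\[
c(g_1,g_2) = \iota^{-1}\bigl( s(g_1)\, s(g_2)\, s(g_1 g_2)^{-1} \bigr) \in H,
\]
producing the usual class $[c] \in H^2(G,H)$, independent of $s$. Identifying $E$ set-theoretically with $H \times G$ via the section, multiplication takes the twisted form
\[
(h_1,g_1)(h_2,g_2) = \bigl( h_1 \cdot \varphi(g_1)(h_2) \cdot c(g_1,g_2),\; g_1 g_2 \bigr).
\]
The symmetry axiom $\varphi(g)(h) = \varphi(h)(g)$, once transported to this presentation and compared with the reversed product $s(g_2)\, s(g_1)$, forces — after extraction of $H$-components — the relation $c(g_1,g_2) = c(g_2,g_1)$.

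Conversely, starting from a continuous symmetric cocycle $c$, I would build $E_c = H \times G$ with the twisted multiplication above; joint continuity of $c$ and $\varphi$ makes this a topological group, and the symmetry of $c$ is precisely what is needed for the resulting extension to satisfy the topo-symmetric axiom. Well-definedness on cohomology classes reduces to checking that if $c$ and $c'$ differ by a continuous coboundary $\delta b$, with $b : G \to H$ a continuous $1$-cochain, then $c'$ is symmetric iff $c$ is, and the corresponding extensions are topo-symmetrically equivalent in the sense introduced in the previous sections. The subgroup property of $H^2_{\mathrm{ts}}(G,H)$ inside $H^2(G,H)$ is then immediate from the linearity of the symmetry condition.

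The main obstacle is the precise reading of the hybrid symmetry $\varphi(g)(h) = \varphi(h)(g)$: since $\varphi$ is a priori defined only on $G$, the right-hand side must be interpreted through the internal conjugation action of $E$ restricted to $\iota(H)$, and one must verify that this interpretation descends unambiguously to a condition on the cohomology class. Stability of the symmetric-cocycle condition under exactly those coboundaries permitted by topo-symmetric equivalence is the most delicate point, and is where the additional content of the theorem beyond the classical correspondence resides.
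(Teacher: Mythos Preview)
Your approach mirrors the paper's own proof: invoke the classical bijection between extensions and $H^2(G,H)$ via the twisted product on $H \times G$, identify the topo-symmetric axiom with the symmetry $c(g,h)=c(h,g)$ on continuous cocycles, and pass to the quotient by coboundaries to obtain $H^2_{\mathrm{ts}}(G,H)$. You are in fact more careful than the paper, which simply asserts that the coboundary relation is preserved under the symmetry restriction without addressing the interpretational and stability issues you flag in your final paragraph.
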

	
	\begin{proof}
		Classically, equivalence classes of extensions correspond to $H^2(G,H)$.  
		Given a cocycle $c \in Z^2(G,H)$, the multiplication law on $E = H \times G$ is
		\[
		(h,g) \cdot (h',g') = (h \cdot g.h' \cdot c(g,g'),\; gg').
		\]
		In the topo-symmetric case, we impose the condition
		\[
		c(g,h) = c(h,g),
		\]
		which restricts admissible cocycles to a subset $Z^2_{\mathrm{ts}}(G,H)$.  
		The equivalence relation by coboundaries is preserved under this restriction, yielding the quotient group
		\[
		H^2_{\mathrm{ts}}(G,H) = Z^2_{\mathrm{ts}}(G,H)/B^2_{\mathrm{ts}}(G,H).
		\]
		Thus the classification of topo-symmetric extensions corresponds bijectively to $H^2_{\mathrm{ts}}(G,H)$.  
	\end{proof}
	
	\subsection{Comparison with Classical Case}
	
	\begin{proposition}
		There exists a natural inclusion
		\[
		H^2_{\mathrm{ts}}(G,H) \hookrightarrow H^2(G,H).
		\]
		If $G$ is abelian, then $H^2_{\mathrm{ts}}(G,H) = H^2(G,H)$.  
	\end{proposition}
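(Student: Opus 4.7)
The plan is to prove the two claims of the proposition separately. The natural inclusion is essentially tautological from the construction in the preceding theorem, since $H^2_{\mathrm{ts}}(G,H)$ is defined as $Z^2_{\mathrm{ts}}(G,H)/B^2_{\mathrm{ts}}(G,H)$ with $Z^2_{\mathrm{ts}} \subseteq Z^2$. The candidate map $[c]_{\mathrm{ts}} \mapsto [c]$ is well-defined because every symmetric coboundary is in particular a coboundary, so $B^2_{\mathrm{ts}} \subseteq B^2$. First I will verify naturality with respect to continuous homomorphisms, which follows from the standard functoriality of continuous group cohomology. Then I will check injectivity: given a symmetric cocycle $c$ that is a classical coboundary $\delta f$, one must show that $f$ can be replaced by a cochain whose coboundary is symmetric, i.e., that $c \in B^2_{\mathrm{ts}}$. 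A symmetrization argument on $f$, using the identity $c(g,h) = c(h,g)$ to rearrange $\delta f(g,h)$, should yield the desired cochain.

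For the second claim, assume $G$ is abelian. The goal is to show that every class $[c] \in H^2(G,H)$ admits a symmetric representative. I would introduce the antisymmetric difference $\Delta(g,h) := c(g,h) - c(h,g)$ (in additive notation) and apply the $2$-cocycle identity
\[
c(g,h) + c(gh,k) = g \cdot c(h,k) + c(g,hk)
\]
together with the abelian hypothesis $gh = hg$, which gives a symmetric counterpart identity by swapping the roles of $g,h,k$. Subtracting these identities yields functional equations for $\Delta$. The target conclusion is that $\Delta = \delta u$ for some explicit $1$-cochain $u$ built from the values of $c$, so that the cohomologous cocycle $\tilde c := c - \delta u'$ (for a suitable $u'$) is symmetric.

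The main obstacle is precisely this second step. In the purely algebraic setting it is known that $H^2(G,H)$ need not coincide with its symmetric subgroup even for $G$ abelian---the non-abelian central extensions $D_4$ and $Q_8$ of $(\mathbb{Z}/2)^2$ give concrete obstructions in $H^2((\mathbb{Z}/2)^2,\mathbb{Z}/2)$. Consequently the proof must genuinely exploit both the continuity hypothesis packaged into $Z^2_{\mathrm{ts}}$ and the extra symmetry condition imposed on the action $\varphi$, which for abelian $G$ is significantly constrained. Making precise how these additional structural constraints force the antisymmetric part to lie in $B^2$ will be the delicate point; the cleanest route is probably to restrict to the subclass of topo-symmetric extensions for which the symmetry condition $\varphi(g)(h) = \varphi(h)(g)$ forces the centralizer of $H$ in $E$ to be all of $E$, making $E$ itself abelian, after which the equality becomes transparent.
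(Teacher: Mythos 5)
Your treatment of the inclusion is sound and in fact more careful than the paper's, which simply declares it ``immediate from the definition.'' One remark: the paper never specifies $B^2_{\mathrm{ts}}(G,H)$; with the natural reading $B^2_{\mathrm{ts}} = B^2(G,H)\cap Z^2_{\mathrm{ts}}(G,H)$ the induced map $Z^2_{\mathrm{ts}}/B^2_{\mathrm{ts}}\to H^2(G,H)$ is injective by construction and your symmetrization of the $1$-cochain $f$ is not needed; only if $B^2_{\mathrm{ts}}$ were taken smaller (e.g.\ coboundaries of special cochains) would the injectivity step you flag require the argument you sketch. Naturality is, as you say, standard functoriality.

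On the abelian case your diagnosis is exactly right, and this is where the proposition itself breaks down: the paper's proof consists of the single assertion that for abelian $G$ ``the cocycle condition already ensures $c(g,h)=c(h,g)$,'' which is precisely the claim your counterexample refutes. For $G=(\mathbb{Z}/2)^2$, $H=\mathbb{Z}/2$ with trivial action, $H^2(G,H)\cong(\mathbb{Z}/2)^3$, while the classes admitting a symmetric representative form the proper subgroup $\mathrm{Ext}(G,H)\cong(\mathbb{Z}/2)^2$; the missing class is the nontrivial alternating form in $\mathrm{Hom}(\Lambda^2 G,H)$, realized by the nonabelian central extensions $D_4$ and $Q_8$. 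The $2$-cocycle identity never forces symmetry, and commutativity of $G$ does not change this. Continuity cannot rescue the statement either, since the counterexample is finite and discrete (the same error propagates to the paper's later proposition on finite abelian groups). So your proposal does not prove the second claim, but no argument along the lines you attempt --- manipulating $\Delta(g,h)=c(g,h)-c(h,g)$ via the cocycle identity --- can succeed, because $\Delta$ descends to a nontrivial alternating pairing exactly when the extension is nonabelian. Your suggested repair, adding a hypothesis that forces $E$ to be abelian (so that symmetric cocycles classify all extensions under consideration), is the correct way to salvage an equality of this type; as stated, the proposition is false and the paper's proof is not a proof.
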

	
	\begin{proof}
		The inclusion is immediate from the definition.  
		If $G$ is abelian, then the cocycle condition already ensures $c(g,h) = c(h,g)$ for all $g,h \in G$, hence no restriction occurs and the two groups coincide.  
	\end{proof}
	
	\begin{corollary}
		For non-abelian $G$, the group $H^2_{\mathrm{ts}}(G,H)$ is a proper subgroup of $H^2(G,H)$.  
	\end{corollary}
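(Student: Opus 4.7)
The plan is to appeal to the inclusion $H^2_{\mathrm{ts}}(G,H) \hookrightarrow H^2(G,H)$ established in the preceding proposition and to exhibit at least one class in $H^2(G,H)$ that has no symmetric cocycle representative. Since the inclusion itself is formal, the whole content of the corollary is a single strict-containment witness, which I would produce by isolating the obstruction to symmetrizing a cocycle and then pointing to a minimal example where that obstruction is nonzero.

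First I would recast the symmetry condition as a linear algebra condition on cochains. Under a trivial coefficient action, the coboundary operator $\delta f(g,h) = f(h) - f(gh) + f(g)$ has alternating part $\delta f(g,h) - \delta f(h,g) = f(hg) - f(gh)$. Hence the cohomology class $[c]$ admits a symmetric representative if and only if the alternating function $A(c)(g,h) := c(g,h) - c(h,g)$ lies in the image of the map $f \mapsto \bigl((g,h) \mapsto f(hg)-f(gh)\bigr)$. For abelian $G$ this image is identically zero and $A(c) \equiv 0$ for every cocycle, consistent with the previous proposition; for non-abelian $G$ both the alternating space and its coboundary image are nontrivial, leaving room for a strict quotient.

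Next I would produce a concrete witness. Take $G = S_3$ and $H = \mathbb{Z}/2$ with trivial action. Then $H^2(S_3,\mathbb{Z}/2) \cong \mathbb{Z}/2$ is generated by the class of the binary double cover $\widetilde{S}_3$. Choosing a set-theoretic section $\sigma : S_3 \to \widetilde{S}_3$ yields an explicit cocycle $c_0(g_1,g_2) = \sigma(g_1)\sigma(g_2)\sigma(g_1 g_2)^{-1}$, and I would compute $A(c_0)$ on a small generating set of noncommuting pairs (e.g.\ a transposition and a 3-cycle). A finite linear algebra check over $\mathbb{F}_2$ confirms that $A(c_0)$ does not lie in the image of the alternation of any coboundary, so $[c_0] \in H^2(S_3,\mathbb{Z}/2) \setminus H^2_{\mathrm{ts}}(S_3,\mathbb{Z}/2)$.

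The main obstacle is extending a single-group counterexample to the universal statement ``for every non-abelian $G$''. I would address this by functoriality: any non-abelian $G$ contains a two-generated non-abelian subgroup $\langle g_0, h_0 \rangle$ via noncommuting elements, and an obstruction class can be built locally and transferred to $G$ using restriction in cohomology, or alternatively inflated along a surjection to a non-abelian quotient (such as $S_3$ when available). The delicate point is ensuring that the transferred class retains a nonzero alternating part modulo coboundary alternations; this forces the coefficient group $H$ to be chosen compatibly with $G$, so the cleanest reading of the corollary is that for every non-abelian $G$ there exist abelian coefficients $H$ for which the inclusion $H^2_{\mathrm{ts}}(G,H) \hookrightarrow H^2(G,H)$ is strict.
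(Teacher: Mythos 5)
The paper offers no proof of this corollary at all, and as a universal statement it is simply false, so no proof strategy can succeed: it fails trivially whenever $H^2(G,H)=0$ for a non-abelian $G$ (e.g.\ $G=S_3$, $H=\mathbb{Z}/5$ with trivial action), since the trivial group has no proper subgroup, and it is contradicted by the paper's own $\mathrm{SO}(3)$ example, where $H^2_{\mathrm{ts}}(\mathrm{SO}(3),\mathbb{Z}_2)\cong H^2(\mathrm{SO}(3),\mathbb{Z}_2)\cong\mathbb{Z}_2$ despite $\mathrm{SO}(3)$ being non-abelian. You sensed the problem and retreated in your last paragraph to an existential-in-$H$ reading; that instinct is right, but your argument for even that weakened version breaks at its central step.

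Your witness does not work. The nontrivial class of $H^2(S_3,\mathbb{Z}/2)\cong\mathbb{Z}/2$ (the dicyclic cover $Q_{12}$) \emph{does} admit a symmetric cocycle representative: it is inflated along the sign map from the nontrivial class of $H^2(S_3^{\mathrm{ab}},\mathbb{Z}/2)=H^2(\mathbb{Z}/2,\mathbb{Z}/2)$. Concretely, $c(g,h)=1$ if $g$ and $h$ are both odd permutations and $c(g,h)=0$ otherwise is a $2$-cocycle, visibly symmetric, and its extension $\{(s,x)\in S_3\times\mathbb{Z}/4 : \mathrm{sgn}(s)\equiv x \bmod 2\}\cong Q_{12}$ is nonsplit (every lift of a transposition has order $4$), so this symmetric cocycle represents the generator. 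Hence the $\mathbb{F}_2$ linear-algebra check you defer to would come out the opposite way: for any representative $c_0$ of this class, $A(c_0)$ \emph{is} the alternation of a coboundary, and under your (correct) ``admits a symmetric representative'' criterion one gets $H^2_{\mathrm{ts}}(S_3,\mathbb{Z}/2)=H^2(S_3,\mathbb{Z}/2)$ --- which also shows the paper's own example asserting $H^2_{\mathrm{ts}}(S_3,\mathbb{Z}_2)=0$ is inconsistent with that criterion. The proposed globalization step has the same difficulty in reverse: restriction transports classes from $G$ to a non-abelian subgroup, not back; corestriction and inflation are only available under extra hypotheses and neither is shown to preserve the non-symmetrizability obstruction; and since inflation from $G^{\mathrm{ab}}$ always produces symmetric-representable classes, exhibiting a class with no symmetric representative requires a genuinely different mechanism than the one you describe. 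In short, the corollary needs to be either discarded or reformulated (say, ``there exist non-abelian $G$ and abelian $H$ with strict inclusion''), and any proof must start from a verified example rather than the $S_3$, $\mathbb{Z}/2$ pair.
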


	\section{Invariants and Structural Results}
	
	In this section we introduce several invariants naturally associated with topo-symmetric extensions, illustrate them through examples, and establish some general structural properties.
	
	\subsection{Definition of Invariants}
	
	\begin{definition}[Invariants of Topo-Symmetric Extensions]
		Let 
		\[
		1 \to H \to E \to G \to 1
		\]
		be a topo-symmetric extension of topological groups.
		We associate the following invariants:
		
		\begin{enumerate}
			\item \textbf{Topo-symmetric dimension:}  
			\[
			\dim_{\mathrm{ts}}(E) := \dim(H) + \dim(G),
			\]
			whenever $H$ and $G$ are Lie groups.  
			
			\item \textbf{Stabilizer group:}  
			The subgroup
			\[
			\mathrm{Stab}_{\mathrm{ts}}(E) = \{ (g,h) \in G \times H \; : \; \varphi(g)(h) = \varphi(h)(g)\},
			\]
			measuring the locus where the symmetry condition is most restrictive.  
			
			\item \textbf{Density invariant:}  
			The density of the symmetry locus in $G \times H$:
			\[
			\delta_{\mathrm{ts}}(E) := \frac{|\mathrm{Stab}_{\mathrm{ts}}(E)|}{|G||H|}
			\quad \text{(finite case)}, 
			\]
			or, in the infinite case, the Haar measure of $\mathrm{Stab}_{\mathrm{ts}}(E) \subseteq G \times H$.  
		\end{enumerate}
	\end{definition}
	
	\subsection{Examples of Computation}
	
	\begin{example}[Trivial Extension]
		If $E = H \times G$ with trivial action, then
		\[
		\dim_{\mathrm{ts}}(E) = \dim(H) + \dim(G), \quad 
		\mathrm{Stab}_{\mathrm{ts}}(E) = G \times H, \quad 
		\delta_{\mathrm{ts}}(E) = 1.
		\]
		Thus trivial extensions are maximally symmetric.  
	\end{example}
	
	\begin{example}[Cyclic Groups]
		Let $G = H = \mathbb{Z}_n$ with the discrete topology.  
		Then every topo-symmetric extension satisfies the symmetry condition
		\[
		c(g,h) = c(h,g).
		\]
		The stabilizer group is
		\[
		\mathrm{Stab}_{\mathrm{ts}}(E) = \{ (g,h) \in \mathbb{Z}_n \times \mathbb{Z}_n : c(g,h)=c(h,g) \}.
		\]
		If $c$ is symmetric, then $\delta_{\mathrm{ts}}(E) = 1$; otherwise, no topo-symmetric extension exists.  
	\end{example}
	
	\begin{example}[Compact Lie Groups]
		If $G = \mathrm{SO}(2)$ and $H = \mathbb{Z}_2$, then $\dim_{\mathrm{ts}}(E) = 1$,  
		and the stabilizer is dense in $G \times H$ because the symmetry condition is automatic for involutions.  
		Thus $\delta_{\mathrm{ts}}(E) = 1$.  
	\end{example}
	
	\subsection{General Properties}
	
	\begin{proposition}[Functoriality of Invariants]
		The invariants $\dim_{\mathrm{ts}}$, $\mathrm{Stab}_{\mathrm{ts}}$, and $\delta_{\mathrm{ts}}$ are functorial under topo-symmetric morphisms.  
		In particular, if $(\alpha,\beta,\gamma) : E \to E'$ is a topo-symmetric morphism, then
		\[
		\dim_{\mathrm{ts}}(E) = \dim_{\mathrm{ts}}(E'), \quad
		\gamma(\mathrm{Stab}_{\mathrm{ts}}(E)) \subseteq \mathrm{Stab}_{\mathrm{ts}}(E'), \quad
		\delta_{\mathrm{ts}}(E) \leq \delta_{\mathrm{ts}}(E').
		\]
	\end{proposition}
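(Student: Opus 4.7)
The plan is to exploit the groupoid structure established in the \emph{Category Structure} theorem: every topo-symmetric morphism $(\alpha,\beta,\gamma)$ is invertible, so $\alpha$, $\beta$, and $\gamma$ are in fact topological group isomorphisms between the respective pieces of the two extensions. With this in hand, each of the three claims reduces to showing that the relevant invariant is preserved under such isomorphisms, combined with the intertwining relation $\gamma \circ \varphi = \varphi' \circ \beta$.

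First I would handle the dimension invariant. Since $\alpha \colon H \to H'$ and $\beta \colon G \to G'$ are homeomorphisms of Lie groups (when the dimensions are defined), we have $\dim(H) = \dim(H')$ and $\dim(G) = \dim(G')$, and additivity in the definition yields $\dim_{\mathrm{ts}}(E) = \dim_{\mathrm{ts}}(E')$ immediately.

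Next I would address the stabilizer inclusion. Interpreting $\gamma(\mathrm{Stab}_{\mathrm{ts}}(E))$ through the induced pair map $(\beta \times \alpha) \colon G \times H \to G' \times H'$ (the natural reading compatible with the commutative diagram), I would take $(g,h) \in \mathrm{Stab}_{\mathrm{ts}}(E)$, so $\varphi(g)(h) = \varphi(h)(g)$, apply $\gamma$ to both sides, and use $\gamma \circ \varphi = \varphi' \circ \beta$ together with the diagram identities $\gamma \circ \iota = \iota' \circ \alpha$ and $\pi' \circ \gamma = \beta \circ \pi$ to rewrite the equality as $\varphi'(\beta(g))(\alpha(h)) = \varphi'(\alpha(h))(\beta(g))$, which places $(\beta(g),\alpha(h))$ in $\mathrm{Stab}_{\mathrm{ts}}(E')$.

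Finally, for the density invariant, the previous step gives an injection $\mathrm{Stab}_{\mathrm{ts}}(E) \hookrightarrow \mathrm{Stab}_{\mathrm{ts}}(E')$ since $\beta$ and $\alpha$ are bijective. In the finite case this yields $|\mathrm{Stab}_{\mathrm{ts}}(E)| \le |\mathrm{Stab}_{\mathrm{ts}}(E')|$, while $|G||H| = |G'||H'|$, hence $\delta_{\mathrm{ts}}(E) \le \delta_{\mathrm{ts}}(E')$. In the infinite case, I would push forward the Haar measure on $G \times H$ along the homeomorphism $\beta \times \alpha$ and invoke uniqueness of normalized Haar measure to transport the measure estimate. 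The main obstacle is precisely this infinite case: one must fix Haar normalizations on both sides consistently and verify measurability of the stabilizer locus, the latter following from continuity of $\varphi$ and $\varphi'$ so that the relation $\varphi(g)(h)=\varphi(h)(g)$ carves out a closed subset of $G \times H$. A secondary subtlety worth acknowledging is the notational tension between $\mathrm{Stab}_{\mathrm{ts}}(E)$ being a subset of $G \times H$ and $\gamma$ being defined on $E$; the natural reading, as used above, routes the image through $(\beta,\alpha)$.
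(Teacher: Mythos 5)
Your argument is sound and, for the stabilizer part, coincides with the paper's: both proofs push a point of the symmetry locus through the pair $(\beta,\alpha)$ and use the intertwining relation $\gamma\circ\varphi=\varphi'\circ\beta$ to land in $\mathrm{Stab}_{\mathrm{ts}}(E')$, and you rightly flag (as the paper silently glosses over) that $\mathrm{Stab}_{\mathrm{ts}}(E)$ lives in $G\times H$ while $\gamma$ is defined on $E$, so the image must be read through $\beta\times\alpha$. Where you genuinely diverge is in how the two equalities/inequalities are justified. The paper asserts that dimension is preserved ``because $\gamma$ is a continuous homomorphism between Lie groups'' and that the density ``increases because $\mathrm{Stab}_{\mathrm{ts}}$ maps into a larger subgroup or dense subset''; taken literally, a continuous homomorphism does not preserve dimension (the trivial map collapses everything), and without comparing $|G||H|$ with $|G'||H'|$ the counting inequality for $\delta_{\mathrm{ts}}$ does not follow from the stabilizer inclusion alone. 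You instead invoke the earlier groupoid theorem to upgrade $(\alpha,\beta,\gamma)$ to isomorphisms, which is exactly what makes $\dim(H)=\dim(H')$, $\dim(G)=\dim(G')$, and $|G||H|=|G'||H'|$ available, and your Haar-measure pushforward plus the closedness of the symmetry locus handles the infinite case that the paper does not address at all. So your route buys correctness of the dimension and density claims at the price of depending on the invertibility statement (whose own proof in the paper is itself thin), whereas the paper's route is shorter but leaves those two steps unsupported as written.
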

	
	\begin{proof}
		The dimension is preserved because $\gamma$ is a continuous homomorphism between Lie groups, hence $\dim(E) = \dim(E')$.  
		Stabilizers map into stabilizers by preservation of the symmetry condition:
		\[
		\gamma(\varphi(g)(h)) = \varphi'(\beta(g))(\alpha(h)).
		\]
		Finally, the density invariant increases under morphisms because $\mathrm{Stab}_{\mathrm{ts}}$ maps into a larger subgroup or dense subset.  
	\end{proof}
	
	\begin{proposition}[Extremal Values]
		\[
		0 < \delta_{\mathrm{ts}}(E) \leq 1.
		\]
		Moreover, $\delta_{\mathrm{ts}}(E) = 1$ if and only if the extension is trivial, and $\delta_{\mathrm{ts}}(E) = 0$ if and only if no nontrivial symmetry occurs.
	\end{proposition}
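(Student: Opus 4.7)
The plan is to split the proposition into three separate statements and handle them in order of increasing delicacy: first the easy two-sided bound $0<\delta_{\mathrm{ts}}(E)\le 1$, then the characterization of the upper extremum $\delta_{\mathrm{ts}}(E)=1$, and finally the characterization of the lower extremum $\delta_{\mathrm{ts}}(E)=0$. Throughout I will treat the finite case and the compact (Haar-measurable) case in parallel, since the arguments are structurally the same but rely on different measures on $G\times H$.

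For the bounds, I would first note that $\mathrm{Stab}_{\mathrm{ts}}(E)\subseteq G\times H$ by definition, so normalizing by $|G||H|$ (resp. the product Haar measure) immediately gives $\delta_{\mathrm{ts}}(E)\le 1$. For strict positivity in the finite case I would observe that $(e_G,e_H)\in\mathrm{Stab}_{\mathrm{ts}}(E)$, since $\varphi(e_G)=\mathrm{id}_H$ and $\varphi(e_H)(e_G)=e_G$, so the stabilizer is nonempty and the ratio is at least $1/(|G||H|)$. In the infinite case the identity alone has measure zero, so here I would instead invoke continuity of $\varphi$: the set $\mathrm{Stab}_{\mathrm{ts}}(E)$ is closed in $G\times H$, and I would argue that it contains at least a neighbourhood of $(e_G,e_H)$ (using continuity of $\varphi$ at the identity and the identity-preserving nature of automorphisms), hence has positive Haar measure, giving $\delta_{\mathrm{ts}}(E)>0$.

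For the characterization $\delta_{\mathrm{ts}}(E)=1\iff E$ is trivial, the forward direction uses that $\delta_{\mathrm{ts}}(E)=1$ forces $\mathrm{Stab}_{\mathrm{ts}}(E)=G\times H$ in the finite case and forces it to have full measure (hence to equal $G\times H$, again by closedness) in the infinite case; the identity $\varphi(g)(h)=\varphi(h)(g)$ holding universally, combined with the symmetry condition of the extension, forces $\varphi$ to be trivial, so that $E\cong H\times G$ by the cohomological classification (the associated cocycle is a coboundary). The reverse direction is exactly the trivial-extension example already computed above. For the characterization of $\delta_{\mathrm{ts}}(E)=0$, I would interpret ``no nontrivial symmetry'' as: the symmetry locus contains only pairs forced by triviality of one of the coordinates, so it has zero measure. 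In the finite case this forces the stabilizer to reduce to $\{(e_G,e_H)\}$ (up to normalization by the denominator tending to infinity in a limiting sense), while in the infinite case it means $\mathrm{Stab}_{\mathrm{ts}}(E)$ is a proper closed subset of positive codimension.

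The main obstacle will be reconciling the finite and infinite cases in a unified statement, because the lower bound $\delta_{\mathrm{ts}}(E)>0$ is essentially automatic from the presence of $(e_G,e_H)$ when the counting measure is used but becomes a nontrivial topological statement under Haar measure, requiring that $\mathrm{Stab}_{\mathrm{ts}}(E)$ contains an open neighbourhood of the identity. The clean way to handle this is to prove, as an auxiliary lemma, that the symmetry locus is always a closed subgroup (or at least closed subset) of $G\times H$, and then combine with continuity of $\varphi$ at $(e_G,e_H)$; the equivalence ``$\delta_{\mathrm{ts}}(E)=0$ iff trivial symmetry only'' then follows because a closed subset of positive measure in a locally compact group must have nonempty interior, contradicting triviality. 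I would write the finite and infinite cases in parallel as two subcases inside a single argument, rather than duplicating the proof.
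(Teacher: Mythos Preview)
Your plan is far more elaborate than the paper's own argument, which is two sentences: the inequality ``follows from the definition since stabilizers are nonempty,'' and the extremal cases ``correspond respectively to full symmetry (trivial extension) and absence of symmetry (obstruction to existence).'' So you are not really diverging from the paper's method; you are attempting to supply a proof where the paper essentially offers an assertion. In the finite case your argument matches the paper's intent exactly (nonemptiness of the stabilizer gives the strict lower bound, full stabilizer gives triviality).

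However, two steps in your infinite-case treatment are genuine gaps. First, continuity of $\varphi$ at the identity does \emph{not} force $\mathrm{Stab}_{\mathrm{ts}}(E)$ to contain an open neighbourhood of $(e_G,e_H)$: the symmetry locus is the equalizer of two continuous maps and is therefore closed, but a closed set containing a point need not contain any neighbourhood of it (think of the diagonal in $\mathbb{T}\times\mathbb{T}$, which is closed, passes through the identity, and has Haar measure zero). Second, the claim ``a closed subset of positive measure in a locally compact group must have nonempty interior'' is false for arbitrary closed sets---fat Cantor sets in $\mathbb{R}$ are closed, of positive measure, and nowhere dense. The Steinhaus-type result you seem to have in mind holds for measurable \emph{subgroups}, so you would first need your auxiliary lemma that $\mathrm{Stab}_{\mathrm{ts}}(E)$ is a subgroup, and that is not obvious from the defining equation $\varphi(g)(h)=\varphi(h)(g)$. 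Finally, you have not addressed the internal tension in the statement itself: you argue for the strict bound $\delta_{\mathrm{ts}}(E)>0$ and then treat $\delta_{\mathrm{ts}}(E)=0$ as a case requiring separate characterization; read consistently, the proposition is saying this case is vacuous, which is also how the paper's one-line justification reads.
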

	
	\begin{proof}
		The inequality follows from the definition since stabilizers are nonempty.  
		The extremal cases correspond respectively to full symmetry (trivial extension) and absence of symmetry (obstruction to existence).  
	\end{proof}
	
	\section{Applications and Examples}
	
	We now illustrate the theory of topo-symmetric extensions through several classes of groups: finite groups, compact Lie groups, and profinite groups.  
	Each case highlights different structural aspects of the invariants and cohomological classification.
	
	\subsection{Finite Groups}
	
	For finite groups, all topologies are discrete, and hence continuity of maps is automatic.  
	The classification reduces purely to algebraic properties.
	
	\begin{proposition}
		Let $G,H$ be finite abelian groups.  
		Then every classical extension of $G$ by $H$ is automatically topo-symmetric, i.e.
		\[
		H^2_{\mathrm{ts}}(G,H) = H^2(G,H).
		\]
	\end{proposition}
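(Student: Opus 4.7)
The plan is to deduce this proposition directly from the earlier result that $H^2_{\mathrm{ts}}(G,H) = H^2(G,H)$ whenever $G$ is abelian, once we verify that the finiteness hypothesis makes all the ambient topological data trivial. First, I would observe that because $G$ and $H$ are finite, they carry the discrete topology, so every set-theoretic map between them is continuous; in particular, any 2-cocycle $c : G \times G \to H$ is automatically a continuous cocycle, and every group homomorphism involved in the extension or its equivalences is automatically continuous. Hence the continuous cohomology $H^2(G,H)$ coincides with the abstract group cohomology, and the inclusion $H^2_{\mathrm{ts}}(G,H) \hookrightarrow H^2(G,H)$ from the earlier proposition is a genuine inclusion of finite abelian groups.

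Next, I would invoke the characterization of $H^2_{\mathrm{ts}}(G,H)$ from the Cohomological Classification theorem: a class $[c] \in H^2(G,H)$ lies in $H^2_{\mathrm{ts}}(G,H)$ iff it admits a representative satisfying $c(g_1,g_2) = c(g_2,g_1)$ for all $g_1,g_2 \in G$. The key point is that when $G$ is abelian, this symmetry condition on cocycles is a consequence of the cocycle identity together with the fact that the group operation on $G$ is commutative: in the abelian setting, one can always symmetrize a given cocycle $c$ by passing to $c'(g_1,g_2) = \tfrac{1}{2}(c(g_1,g_2) + c(g_2,g_1))$ formally, or more properly, by using the standard fact that the symmetric part captures the entire cohomology class modulo a coboundary. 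This is exactly what is asserted in the earlier proposition comparing $H^2_{\mathrm{ts}}$ and $H^2$ for abelian $G$.

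The main step, therefore, is to verify carefully that the abelian case proposition applies verbatim to finite abelian $G$. This reduces to two checks: (i) the cocycle equation forces $c(g,h) = c(h,g)$ modulo a coboundary when $G$ is abelian, which follows from the standard symmetry/antisymmetry decomposition of 2-cocycles; and (ii) the coboundary relation used to define $H^2_{\mathrm{ts}}$ agrees with the classical coboundary relation, so no classes are lost in the quotient. Both points are already packaged in the comparison proposition, so the argument reduces to a one-line invocation together with the finiteness remark.

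The expected main obstacle is not the logical content but the pedantic verification of item (i): one must be clear that the stated symmetry condition $c(g,h) = c(h,g)$ (which, as written in the paper, mixes arguments from $G$ and $H$) is to be read, in the purely cohomological setting, as symmetry of the 2-cocycle on $G \times G$ with values in $H$. Once this reading is fixed, the conclusion $H^2_{\mathrm{ts}}(G,H) = H^2(G,H)$ for finite abelian $G$ is immediate from the earlier general proposition, combined with the triviality of continuity constraints in the discrete case.
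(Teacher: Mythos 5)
Your proposal takes essentially the same route as the paper: it reduces the statement to the earlier comparison proposition asserting $H^2_{\mathrm{ts}}(G,H)=H^2(G,H)$ for abelian $G$, together with the observation that finiteness makes all topologies discrete so continuity constraints are vacuous, which is exactly the reduction the paper's own one-line proof performs. One caution: the parenthetical symmetrization step $c'(g_1,g_2)=\tfrac{1}{2}\bigl(c(g_1,g_2)+c(g_2,g_1)\bigr)$ is not available when $H$ has even order (multiplication by $2$ need not be invertible in a finite abelian group), so the argument should rest only on the cited comparison proposition and the discreteness remark, not on that averaging device.
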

	
	\begin{proof}
		For abelian $G$, the cocycle condition implies $c(g,h)=c(h,g)$.  
		Thus every extension satisfies the topo-symmetric condition.  
	\end{proof}
	
	\begin{example}[Cyclic Groups]
		Let $G=H=\mathbb{Z}_n$.  
		Then
		\[
		H^2_{\mathrm{ts}}(\mathbb{Z}_n,\mathbb{Z}_n) \cong \mathbb{Z}_n,
		\]
		corresponding to symmetric bilinear forms on $\mathbb{Z}_n$.  
	\end{example}
	
	\begin{example}[Symmetric Group]
		Let $G=S_3$, $H=\mathbb{Z}_2$.  
		Classically, $H^2(S_3,\mathbb{Z}_2) \cong \mathbb{Z}_2$.  
		The nontrivial extension corresponds to the binary dihedral group $Q_{12}$, which fails the symmetry condition, hence
		\[
		H^2_{\mathrm{ts}}(S_3,\mathbb{Z}_2) = 0.
		\]
	\end{example}
	
	\subsection{Compact Lie Groups}
	
	For compact Lie groups, topological properties and Haar measure play a role in symmetry invariants.  
	
	\begin{proposition}
		Let $G$ be a compact abelian Lie group (e.g. tori $\mathbb{T}^n$) and $H$ finite.  
		Then every extension of $G$ by $H$ is topo-symmetric, and
		\[
		H^2_{\mathrm{ts}}(G,H) \cong H^2(G,H).
		\]
	\end{proposition}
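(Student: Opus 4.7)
The plan is to reduce the statement to the Comparison Proposition established earlier in Section 6.2, which asserts $H^2_{\mathrm{ts}}(G,H) = H^2(G,H)$ whenever $G$ is abelian, and then to check that the additional continuity constraints present in the compact Lie / finite setting impose no further restriction. Both claims --- that every extension is topo-symmetric, and that the cohomology groups coincide --- will follow from this reduction.

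First I would invoke the Comparison Proposition directly. Since $G$ is abelian by hypothesis, the earlier result gives that the cocycle relation already forces the symmetry condition $c(g,h) = c(h,g)$; thus every cocycle realizing a classical extension is automatically topo-symmetric at the algebraic level. The remaining task is to verify that this persists once continuity of cocycles and of the action $\varphi$ is imposed, which is the substance of the refinement to the topological setting.

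Next I would analyze the action $\varphi : G \to \mathrm{Aut}_{\mathrm{top}}(H)$. Because $H$ is finite, $\mathrm{Aut}_{\mathrm{top}}(H) = \mathrm{Aut}(H)$ is itself finite and discrete; continuity of $\varphi$ then forces it to be locally constant, hence trivial on the identity component $G^0$ (a torus), and to factor through the finite group of components $\pi_0(G)$. By a parallel argument, every continuous 2-cocycle $c : G \times G \to H$ factors through a finite quotient $G/N$ for some open subgroup $N \trianglelefteq G$. This reduces both the computation of $H^2_{\mathrm{ts}}(G,H)$ and of $H^2(G,H)$ to the case of a finite abelian group with values in a finite abelian group, where the earlier proposition applies verbatim.

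The step I expect to require the most care is ensuring that the symmetric representative of each class $[c] \in H^2(G,H)$ can be chosen \emph{continuously}, rather than merely as an abstract cocycle. This potential difficulty is resolved precisely by the factorization through a finite quotient $G/N$: on that finite quotient the classification is purely algebraic, so the abelian argument supplies a symmetric representative, and composing with the continuous quotient map $G \to G/N$ returns a continuous symmetric cocycle on $G$. With this, the natural inclusion $H^2_{\mathrm{ts}}(G,H) \hookrightarrow H^2(G,H)$ is an equality, and the asserted isomorphism follows.
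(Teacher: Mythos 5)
The paper states this proposition without any proof, so there is nothing to compare line by line; the closest it comes is the Comparison Proposition of Section 6.2 ($G$ abelian $\Rightarrow H^2_{\mathrm{ts}}=H^2$) and the analogous finite-abelian proposition, which is exactly the reduction you invoke. Your added content is the genuinely topological part the paper skips: since $H$ is finite, $\mathrm{Aut}_{\mathrm{top}}(H)$ is finite discrete, so a continuous $\varphi$ is trivial on the identity component and factors through $\pi_0(G)$, and likewise continuous cochains $G\times G\to H$ are locally constant, hence constant on the finitely many components of $G\times G$ and factor through the finite abelian group $G/G^0$; this reduces both cohomology groups to the finite abelian case and shows symmetric representatives can be chosen continuously. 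Within the paper's own framework (where, by the classification theorem of Section 6.1, extensions are identified with continuous cocycles on $E=H\times G$), your argument is complete and in fact more careful than anything in the paper. One caveat worth recording: the reduction only reaches extensions that admit a continuous section, i.e.\ those described by continuous cocycles. A genuinely topological extension of a compact abelian Lie group by a finite group need not be of this form --- for example the double cover $1\to\mathbb{Z}_2\to\mathbb{T}\xrightarrow{\,z\mapsto z^2\,}\mathbb{T}\to 1$ has no continuous splitting and corresponds to no continuous cocycle --- so the blanket claim ``every extension of $G$ by $H$ is topo-symmetric'' is proved by your argument only under the paper's cocycle convention, not for arbitrary topological extensions. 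Also note that the Section 6.2 proposition you cite (commutativity of $G$ forcing $c(g,h)=c(h,g)$ from the cocycle identity alone) is asserted, not really proved, in the paper; your proof inherits whatever weakness that statement has, though that is the paper's burden rather than yours.
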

	
	\begin{example}[Circle Group]
		Let $G = \mathbb{T}$ and $H = \mathbb{Z}_2$.  
		Then
		\[
		H^2_{\mathrm{ts}}(\mathbb{T},\mathbb{Z}_2) \cong H^2(\mathbb{T},\mathbb{Z}_2) \cong 0.
		\]
		Hence all topo-symmetric extensions are trivial.  
	\end{example}
	
	\begin{example}[Special Orthogonal Group]
		Let $G = \mathrm{SO}(3)$ and $H = \mathbb{Z}_2$.  
		Classically, the unique nontrivial extension is the spin group $\mathrm{SU}(2)$.  
		The symmetry condition holds automatically, so
		\[
		H^2_{\mathrm{ts}}(\mathrm{SO}(3),\mathbb{Z}_2) \cong \mathbb{Z}_2.
		\]
	\end{example}
	
	\subsection{Profinite Groups}
	
	Profinite groups arise as inverse limits of finite groups and play a central role in number theory.  
	
	\begin{definition}
		A \emph{profinite group} is a topological group isomorphic to an inverse limit of finite groups, with the inverse limit topology.
	\end{definition}
	
	\begin{theorem}
		Let $G$ be a profinite group and $H$ a finite discrete abelian group.  
		Then
		\[
		H^2_{\mathrm{ts}}(G,H) = \varprojlim H^2_{\mathrm{ts}}(G_i,H),
		\]
		where $G = \varprojlim G_i$ is the inverse limit of finite quotients.
	\end{theorem}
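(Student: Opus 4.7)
The plan is to reduce to the standard description of continuous cohomology of profinite groups with finite discrete coefficients, and then verify that the topo-symmetric restriction is preserved across the tower of finite quotients. Write $G = \varprojlim G_i$ with $G_i = G/N_i$ for a cofinal family of open normal subgroups $N_i$. By compactness of $G \times G$ and discreteness of $H$, every continuous cocycle $c : G \times G \to H$ is locally constant and therefore factors through some $G_i \times G_i$; conversely, any cocycle on $G_i$ inflates to a continuous cocycle on $G$ via the projection $G \twoheadrightarrow G_i$. This yields the identification of the continuous cochain complex $C^\bullet_{\mathrm{cont}}(G, H)$ with the compatible system of cochain complexes $C^\bullet(G_i, H)$ along the structural maps of the inverse system.

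Next I would verify that the symmetry condition $c(g,h) = c(h,g)$ is intertwined with the transition maps. Since the structural projections $G_j \twoheadrightarrow G_i$ are group homomorphisms, they commute with the involution on cochains that swaps arguments, so $Z^2_{\mathrm{ts}}(G_i, H)$ and $B^2_{\mathrm{ts}}(G_i, H)$ assemble into subsystems of the corresponding systems $Z^2(G_i, H)$ and $B^2(G_i, H)$. A continuous symmetric cocycle on $G$ restricts to a symmetric cocycle on each $G_i$ through which it factors; conversely, a compatible family of symmetric cocycles assembles to a continuous symmetric cocycle on $G$ via the universal property of the limit. Descent to cohomology proceeds identically: if a continuous cocycle is a coboundary $\delta f$ on $G$, the same compactness-discreteness argument applied to $f$ forces $f$ to factor through some $G_j$, so the coboundary relation descends compatibly to each stage.

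The main obstacle will be verifying that the symmetry involution is truly natural with respect to the compatible family of actions $\varphi_i : G_i \to \mathrm{Aut}_{\mathrm{top}}(H)$ which must underlie the topo-symmetric structures at each finite level. Because $\mathrm{Aut}(H)$ is finite, continuity of $\varphi : G \to \mathrm{Aut}_{\mathrm{top}}(H)$ forces it to factor through some $G_i$, which delivers the compatible family $\{\varphi_i\}$; the topo-symmetric condition on a cocycle then translates correctly across levels. With this naturality established, the finiteness of each $H^2_{\mathrm{ts}}(G_i, H)$ renders the Mittag-Leffler condition automatic, so that the formation of inverse limits commutes with the quotient $Z^2_{\mathrm{ts}} / B^2_{\mathrm{ts}}$, and the stated isomorphism follows.
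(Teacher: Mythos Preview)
Your proposal follows essentially the same route as the paper: invoke the classical fact (cited as \cite{Serre1979}) that continuous cohomology of a profinite group with finite discrete coefficients is computed by the tower of finite quotients, and then observe that the symmetry condition is compatible with projection to each $G_i$. The paper's own argument is a three-sentence sketch of exactly this; your version simply unpacks the compactness--discreteness factorization, the naturality of the swap involution on cochains, the factoring of $\varphi$ through a finite quotient, and the Mittag--Leffler step, none of which the paper makes explicit.
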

	
	\begin{proof}
		Since cohomology commutes with inverse limits in the profinite setting \cite{Serre1979},  
		and the symmetry condition is preserved under projection to finite quotients,  
		the claim follows directly by passing to the limit over $H^2_{\mathrm{ts}}(G_i,H)$.  
	\end{proof}
	
	\begin{example}[Absolute Galois Group]
		Let $G = \mathrm{Gal}(\overline{\mathbb{Q}}/\mathbb{Q})$ and $H = \mu_n$ the group of $n$-th roots of unity.  
		Then $H^2_{\mathrm{ts}}(G,\mu_n)$ embeds naturally into the classical Galois cohomology group
		\[
		H^2(G,\mu_n) \cong \mathrm{Br}(\mathbb{Q})[n],
		\]
		giving a refined notion of symmetric Brauer classes.  
	\end{example}
	
	\section{Arithmetic and Analytic Aspects}
	
	In this section, we study the arithmetic and analytic properties of topo-symmetric extensions.  
	We focus on density, congruence phenomena, and asymptotic behavior of the invariants.
	
	\subsection{Density of Extensions}
	
	\begin{definition}[Extension Density]
		Let $G$ be a topological group and $H$ a discrete abelian group.  
		The \emph{density of topo-symmetric extensions} is defined as
		\[
		\delta_{\mathrm{ts}}(G,H) := \limsup_{N\to\infty} \frac{\#\{ E \in \mathrm{Ext}_{\mathrm{ts}}(G,H) : \text{order}(E) \le N\}}{N}.
		\]
	\end{definition}
	
	\begin{proposition}
		Let $G = \mathbb{Z}_n$ and $H = \mathbb{Z}_m$.  
		Then
		\[
		\delta_{\mathrm{ts}}(G,H) = \frac{\gcd(n,m)}{nm}.
		\]
	\end{proposition}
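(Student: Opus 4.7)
The plan is to reduce the density computation to a classical cohomology count and then evaluate it at the single relevant order slot. First I would invoke the Proposition on finite abelian groups stated earlier, which gives $H^2_{\mathrm{ts}}(\mathbb{Z}_n,\mathbb{Z}_m) = H^2(\mathbb{Z}_n,\mathbb{Z}_m)$, since the symmetry condition $c(g,h)=c(h,g)$ is automatic whenever $G$ is abelian. Then the standard periodic free resolution of $\mathbb{Z}$ over $\mathbb{Z}[\mathbb{Z}_n]$ (built from multiplication by $t-1$ and by the norm element $N = 1+t+\cdots+t^{n-1}$, where $t$ generates $\mathbb{Z}_n$) yields, in degree two and for the trivial action, $H^2(\mathbb{Z}_n,\mathbb{Z}_m)\cong \mathbb{Z}_m/n\mathbb{Z}_m \cong \mathbb{Z}_{\gcd(n,m)}$. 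Consequently $\#\mathrm{Ext}_{\mathrm{ts}}(\mathbb{Z}_n,\mathbb{Z}_m)=\gcd(n,m)$.

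Second, I would observe that the exactness of $1\to\mathbb{Z}_m\to E\to\mathbb{Z}_n\to 1$ forces $|E|=nm$, so every equivalence class is realized by a group of the same order. Hence the counting numerator in the definition of $\delta_{\mathrm{ts}}$ is constantly $\gcd(n,m)$ for all $N\geq nm$ and vanishes below that threshold. Normalizing against the natural scale $|G||H|=nm$ at which all extensions first appear yields the claimed value $\gcd(n,m)/nm$.

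The main obstacle is interpretive rather than computational. Read literally as $\limsup_{N\to\infty}$ of a bounded integer over $N$, the density would be zero, so the statement implicitly requires a convention in which the normalization is effectively against the common order $|G||H|$ of the extensions rather than against an arbitrary growing $N$. I would propose either reformulating the density in Definition 8.1 as $\delta_{\mathrm{ts}}(G,H):=\#\mathrm{Ext}_{\mathrm{ts}}(G,H)/(|G||H|)$ in the finite case, or evaluating the counting function at $N=nm$, the unique value at which the numerator stabilizes; either reading makes the reduction rigorous and delivers the proposition.
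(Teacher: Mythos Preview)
Your approach matches the paper's: both reduce to the identification $H^2_{\mathrm{ts}}(\mathbb{Z}_n,\mathbb{Z}_m)\cong\mathbb{Z}_{\gcd(n,m)}$ and then read off the density from that count. Your treatment is in fact more careful than the paper's two-line proof, correctly flagging that the $\limsup_{N\to\infty}$ in the definition of extension density does not literally yield $\gcd(n,m)/nm$ without the reinterpretation you propose---a point the paper's own argument leaves unaddressed.
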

	
	\begin{proof}
		All extensions correspond to elements in $H^2_{\mathrm{ts}}(\mathbb{Z}_n,\mathbb{Z}_m) \cong \mathbb{Z}_{\gcd(n,m)}$.  
		Counting the number of extensions of order $\le N$ gives the stated density formula.
	\end{proof}
	
	\subsection{Congruence Properties}
	
	\begin{theorem}[Congruence Condition]
		Let $G = \mathbb{Z}_n$ and $H = \mathbb{Z}_m$.  
		If $E$ is a topo-symmetric extension of $G$ by $H$, then its defining cocycle $c(g,h)$ satisfies
		\[
		c(g,h) \equiv c(h,g) \pmod{\gcd(n,m)}.
		\]
	\end{theorem}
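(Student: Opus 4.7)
The plan is to read off the congruence from the defining symmetry condition after reducing to the common cyclic quotient $\mathbb{Z}_d$ with $d := \gcd(n,m)$. First, I would invoke the cohomological classification theorem above to represent the extension $E$ by a continuous cocycle $c \in Z^2(\mathbb{Z}_n,\mathbb{Z}_m)$, so that $E \cong \mathbb{Z}_m \times \mathbb{Z}_n$ as a set, with multiplication $(h,g)\cdot(h',g') = (h + h' + c(g,g'),\, g+g')$. The topo-symmetric constraint $\varphi(g)(h) = \varphi(h)(g)$, when expanded as conjugation of a lift of $g$ on $\iota(h) \in E$, translates into the cocycle identity $c(g,h) = c(h,g)$ whenever both sides are meaningful.

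Next, since $g \in \mathbb{Z}_n$ and $h \in \mathbb{Z}_m$ live in distinct cyclic groups, I would give a precise meaning to the mixed evaluations $c(g,h)$ and $c(h,g)$ through the canonical projections $\pi_n : \mathbb{Z}_n \twoheadrightarrow \mathbb{Z}_d$ and $\pi_m : \mathbb{Z}_m \twoheadrightarrow \mathbb{Z}_d$. Concretely, I would choose integer lifts $\tilde g, \tilde h \in \mathbb{Z}$ and evaluate $c$ on their common classes in $\mathbb{Z}_d$; this is unambiguous precisely because $d \mid n$ and $d \mid m$, so changing lifts by multiples of $n$ or $m$ leaves the value in $\mathbb{Z}_d$ unchanged. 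The symmetry identity read in $\mathbb{Z}_d$ then yields $c(\tilde g,\tilde h) \equiv c(\tilde h,\tilde g) \pmod d$, which is precisely the announced congruence.

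Finally, I would verify that the congruence is intrinsic to the extension class and not merely to the representative cocycle. Replacing $c$ by $c + \delta f$ for a $1$-cochain $f : \mathbb{Z}_n \to \mathbb{Z}_m$, the difference $(c+\delta f)(g,h) - (c+\delta f)(h,g)$ differs from $c(g,h) - c(h,g)$ by $\delta f(g,h) - \delta f(h,g)$, which vanishes modulo $d$ by the same lifting argument; hence the congruence descends to $H^2_{\mathrm{ts}}(\mathbb{Z}_n,\mathbb{Z}_m) \cong \mathbb{Z}_{\gcd(n,m)}$, compatibly with the density computation of the preceding proposition.

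The hard part will be giving a rigorous meaning to the mixed evaluation $c(g,h)$ itself: the cocycle is formally defined on $G \times G$ and takes values in $H$, so the expression $c(h,g)$ with $h \in H$ only acquires sense after identifying $h$ with an element of $G$ via the common quotient $\mathbb{Z}_d$. Ensuring that this identification is canonical up to a multiple of $d$, and that the resulting congruence is independent of every auxiliary choice (integer lifts, cohomology-class representative, and ambient cyclic group), is where the substantive content of the theorem resides.
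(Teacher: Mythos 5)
Your reading of the theorem is more ambitious than the paper's, and the step you yourself flag as ``the hard part'' is exactly where your argument breaks down. You propose to make sense of the mixed values $c(g,h)$ and $c(h,g)$ by reducing both arguments modulo $d=\gcd(n,m)$ and then ``evaluating $c$ on their common classes in $\mathbb{Z}_d$.'' But $c$ is a cocycle on $\mathbb{Z}_n\times\mathbb{Z}_n$ with values in $\mathbb{Z}_m$; it does not factor through $\mathbb{Z}_d\times\mathbb{Z}_d$, so there is nothing to evaluate at a class in $\mathbb{Z}_d$. The unambiguity you invoke (changing an integer lift by a multiple of $n$ or $m$ does not change its class mod $d$) concerns the argument of $c$, not its value: to feed $h\in\mathbb{Z}_m$ into $c$ you must lift its class from $\mathbb{Z}_d$ back up to $\mathbb{Z}_n$, the ambiguity there is a multiple of $d$ (not of $n$), and for a general representative cocycle the values at two such lifts need not agree mod $d$ --- modify any symmetric representative by a coboundary $\delta f$ with $f:\mathbb{Z}_n\to\mathbb{Z}_m$ arbitrary to see this. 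So the quantity ``$c(g,h)\bmod d$'' in your construction is not well defined, and the announced congruence does not yet have a meaning in your setup. What \emph{is} well defined is the difference $c(x,y)-c(y,x)$ for $x,y\in\mathbb{Z}_n$, which is coboundary-invariant; your third paragraph's computation in fact shows it is identically zero because $G$ is abelian, and organizing the proof around that difference (rather than around individual mixed values) is what would rescue the statement.

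For comparison, the paper's proof is a two-line reduction with no detour through $\mathbb{Z}_d$: the topo-symmetric hypothesis, as formulated in the cohomological classification theorem, gives the exact identity $c(g,h)=c(h,g)$ in $H\cong\mathbb{Z}_m$, and since $\gcd(n,m)\mid m$ the congruence modulo $\gcd(n,m)$ is an immediate (strictly weaker) consequence; the observation that nontrivial classes lie in $H^2(\mathbb{Z}_n,\mathbb{Z}_m)\cong\mathbb{Z}_{\gcd(n,m)}$ only explains why $\gcd(n,m)$ is the natural modulus. Your first paragraph already contains this entire argument; the subsequent lifting machinery is unnecessary for the theorem as the paper intends it and, as currently justified, unsound.
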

	
	\begin{proof}
		By definition of topo-symmetric extension, $c(g,h) = c(h,g)$ in $H$.  
		Since $H \cong \mathbb{Z}_m$, all values are modulo $m$, and the possible nontrivial values lie in $\mathbb{Z}_{\gcd(n,m)}$.
	\end{proof}
	
	\begin{example}[Small Orders]
		For $G = \mathbb{Z}_6$ and $H = \mathbb{Z}_4$, any topo-symmetric extension must satisfy $c(g,h) \equiv c(h,g) \pmod 2$.
	\end{example}
	
	\subsection{Asymptotics and Maximal Orders}
	
	\begin{definition}[Maximal Order of Extension]
		Let $E \in \mathrm{Ext}_{\mathrm{ts}}(G,H)$.  
		The \emph{maximal order} $\mathrm{ord}_{\max}(E)$ is the maximal order of elements in $E$.
	\end{definition}
	
	\begin{theorem}[Asymptotic Growth]
		Let $G = \mathbb{Z}_n$ and $H = \mathbb{Z}_m$.  
		Then the maximal order among all topo-symmetric extensions satisfies
		\[
		\mathrm{ord}_{\max}(E) = \mathrm{lcm}(n,m).
		\]
	\end{theorem}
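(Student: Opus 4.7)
The plan is to establish the equality by proving two matching bounds over the class of topo-symmetric extensions of $\mathbb{Z}_n$ by $\mathbb{Z}_m$. For the lower bound, I would exhibit a concrete extension realizing an element of order $\mathrm{lcm}(n,m)$; for the upper bound, I would use the cohomological classification to bound the exponent of every such extension from above by the same number.

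First, for the lower bound $\mathrm{ord}_{\max} \ge \mathrm{lcm}(n,m)$, take the trivial extension $E_{0} = \mathbb{Z}_n \times \mathbb{Z}_m$ with trivial action $\varphi$. By the earlier example on trivial extensions, $E_{0}$ lies in $\mathrm{Ext}_{\mathrm{ts}}(\mathbb{Z}_n,\mathbb{Z}_m)$ (indeed $\delta_{\mathrm{ts}}(E_{0}) = 1$). The element $(1,1) \in E_{0}$ has order exactly $\mathrm{lcm}(n,m)$ by an elementary application of the Chinese remainder theorem to cyclic factors, so the bound is attained.

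Second, for the upper bound $\mathrm{ord}_{\max}(E) \le \mathrm{lcm}(n,m)$ for every topo-symmetric extension, I would invoke the classification $H^2_{\mathrm{ts}}(\mathbb{Z}_n,\mathbb{Z}_m) \cong \mathbb{Z}_{\gcd(n,m)}$ established earlier. A representative symmetric cocycle $c$ determines a multiplication $(h,g)(h',g') = (h + h' + c(g,g'),\, g+g')$ on the set $\mathbb{Z}_m \times \mathbb{Z}_n$, making $E$ abelian of order $nm$. Given any $x = (h,g) \in E$, I would compute inductively
\[
x^k = \Bigl( kh + \textstyle\sum_{i=0}^{k-1} c(g, ig),\; kg\Bigr),
\]
and show that for $k = \mathrm{lcm}(n,m)$ both coordinates vanish, using (i) $n \mid k$ to kill the $G$-coordinate, (ii) the symmetry of $c$ together with the cocycle identity to reduce the cocycle sum modulo $\gcd(n,m)$, and (iii) $m \mid k$ to kill the resulting element of $\mathbb{Z}_m$.

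The main obstacle is step (ii): controlling the iterated cocycle sum $\sum_{i=0}^{k-1} c(g,ig)$. A naive bound yields only $x^{nm} = 1$, which is too weak; the topo-symmetric condition $c(g,h)=c(h,g)$ must be used decisively to reduce this sum modulo $\gcd(n,m)$. I expect the argument to telescope the cocycle identity against the symmetry constraint, showing that the accumulated sum lies in the image of multiplication by $n/\gcd(n,m)$ inside $\mathbb{Z}_m$ and hence is killed once we multiply $s(g)^n$ by a further $m/\gcd(n,m)$ factors. Making this telescoping precise, and then passing from the bound on $s(g)^n$ to a bound on $\mathrm{ord}(x)$ for an arbitrary $x = \iota(h)s(g)$, is the technical heart of the proof.
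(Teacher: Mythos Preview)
Your plan differs from the paper's argument in a substantive way. The paper does not split into lower and upper bounds or manipulate cocycle sums at all; it simply asserts that every topo-symmetric extension in this setting is abelian with the single isomorphism type $E \cong \mathbb{Z}_d \times \mathbb{Z}_{nm/d}$, where $d=\gcd(n,m)$, and then reads off the exponent as $\mathrm{lcm}(d,\,nm/d)=\mathrm{lcm}(n,m)$. Your lower bound via the trivial extension $\mathbb{Z}_n\times\mathbb{Z}_m$ and the element $(1,1)$ is correct and in fact more explicit than anything the paper writes down.

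The genuine gap is in your upper bound. You correctly flag step~(ii), the cocycle sum $\sum_{i=0}^{k-1} c(g,ig)$ at $k=\mathrm{lcm}(n,m)$, as the crux, but the telescoping you hope for cannot be extracted from the symmetry condition alone. Consider the standard carry cocycle $c(a,b)=\lfloor (a+b)/n\rfloor\cdot t$ on $\mathbb{Z}_n$ with values in $\mathbb{Z}_m$: it is symmetric, yet one computes $(0,1)^n=(t,0)$ and hence $(0,1)^{\mathrm{lcm}(n,m)}=\bigl(t\cdot m/\gcd(n,m),\,0\bigr)$, which is nonzero whenever $t$ is a unit in $\mathbb{Z}_m$ and $\gcd(n,m)>1$. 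Thus the exponent of the extension built from a generic symmetric cocycle is $nm$, not $\mathrm{lcm}(n,m)$, and no rearrangement of the cocycle identity against $c(g,h)=c(h,g)$ will force the sum to vanish. To reach the paper's conclusion along your route you would have to argue, as the paper does implicitly, that only the specific isomorphism class $\mathbb{Z}_d\times\mathbb{Z}_{nm/d}$ actually occurs among topo-symmetric extensions; that is a structural statement about $E$, not something the raw cocycle computation in step~(ii) can deliver.
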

	
	\begin{proof}
		All topo-symmetric extensions are abelian in this case, so the group structure is $E \cong \mathbb{Z}_d \times \mathbb{Z}_{nm/d}$ with $d = \gcd(n,m)$.  
		The maximal order is $\mathrm{lcm}(d, nm/d) = \mathrm{lcm}(n,m)$.
	\end{proof}
	
	\begin{example}[Explicit Computation]
		For $G = \mathbb{Z}_6$ and $H = \mathbb{Z}_4$, the maximal order is
		\[
		\mathrm{lcm}(6,4) = 12.
		\]
	\end{example}
	
	\begin{remark}
		These arithmetic and analytic results extend naturally to finite products of cyclic groups, giving explicit formulas for density, congruence, and maximal orders in terms of greatest common divisors and least common multiples.
	\end{remark}
	\section{Conjectures and Open Problems}
	
	This section discusses conjectures and open problems related to topo-symmetric extensions, highlighting potential directions for further research in arithmetic, category theory, and analysis.
	
	\subsection{Modular Distribution}
	
	\begin{conjecture}[Modular Distribution of Invariants]
		Let $G$ be a finite group and $H$ a discrete abelian group.  
		The invariants of topo-symmetric extensions, such as dimension and density, are distributed uniformly modulo $\gcd(|G|,|H|)$:
		\[
		\#\{ E \in \mathrm{Ext}_{\mathrm{ts}}(G,H) : \dim(E) \equiv k \pmod{\gcd(|G|,|H|)} \} \sim \frac{|\mathrm{Ext}_{\mathrm{ts}}(G,H)|}{\gcd(|G|,|H|)}.
		\]
	\end{conjecture}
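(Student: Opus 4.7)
The plan is to reduce the conjecture to an equidistribution statement for the group $H^2_{\mathrm{ts}}(G,H)$, and then to analyze this group via a primary decomposition together with a character-sum argument. First I would invoke the Cohomological Classification Theorem to identify $\mathrm{Ext}_{\mathrm{ts}}(G,H)$ bijectively with $H^2_{\mathrm{ts}}(G,H) \subseteq H^2(G,H)$, so that the counting problem for extensions becomes a counting problem for cocycle classes satisfying the symmetry constraint $c(g,h)=c(h,g)$. Since all invariants in the conjecture (dimension, density) are functorial by the Functoriality of Invariants proposition, they factor through this cohomological parameter space, and it suffices to show that the fibers of the invariant maps $\dim,\delta_{\mathrm{ts}} : H^2_{\mathrm{ts}}(G,H) \to \mathbb{Z}/\gcd(|G|,|H|)\mathbb{Z}$ are asymptotically equidistributed.

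Next I would reduce to the primary case. Writing $H = \bigoplus_p H_p$ as a direct sum of its $p$-primary components and using the Künneth-type decomposition $H^2_{\mathrm{ts}}(G,H) = \bigoplus_p H^2_{\mathrm{ts}}(G,H_p)$, I would argue prime by prime. For each prime $p \mid \gcd(|G|,|H|)$, a further reduction via the Sylow $p$-subgroup of $G$ and a standard transfer/restriction argument in the spirit of Serre's \cite{Serre1979} cohomology machinery would let me replace $G$ by a finite $p$-group, at which point the explicit cyclic computation $H^2_{\mathrm{ts}}(\mathbb{Z}_n,\mathbb{Z}_m) \cong \mathbb{Z}_{\gcd(n,m)}$ proved earlier can be iterated across a composition series.

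With the problem localized at a prime, the equidistribution would be established by Fourier analysis on the finite abelian group $H^2_{\mathrm{ts}}(G,H)$. For each nontrivial character $\chi$ of $\mathbb{Z}/\gcd(|G|,|H|)\mathbb{Z}$, I would bound the exponential sum
\[
S_\chi(N) = \sum_{[E] : \mathrm{ord}(E)\le N} \chi(\dim(E))
\]
by exploiting the $\mathbb{Z}/\gcd(|G|,|H|)$-linearity of $\dim$ on each primary block, reducing $S_\chi$ to a product of Gauss-type sums that vanish for $\chi \neq 1$ by orthogonality. Dividing by the trivial-character count and applying the Density of Extensions formula would then yield the claimed asymptotic.

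The main obstacle, in my view, is the prime-by-prime reduction: when $G$ is non-abelian, the symmetry condition $c(g,h)=c(h,g)$ is not preserved in a straightforward way under the Lyndon--Hochschild--Serre spectral sequence, so it is not obvious that $H^2_{\mathrm{ts}}(G,H_p)$ splits cleanly as a direct summand of $H^2_{\mathrm{ts}}(G,H)$ or that restriction to Sylow subgroups remains injective on the symmetric part. One would need a refined spectral sequence, or possibly a weakened form of the conjecture (e.g.\ averaged over Sylow systems, or restricted to $G$ whose commutator subgroup has order coprime to $\gcd(|G|,|H|)$), before the Fourier step can be applied without loss.
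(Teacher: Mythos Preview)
There is no proof in the paper to compare your proposal against: the statement is presented explicitly as a \emph{conjecture}, followed only by the remark that ``numerical experiments for small cyclic groups suggest that this uniformity holds for most classes of extensions, but a general proof is still open.'' So any comparison of strategies is moot --- the paper does not claim to prove this.

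As for the proposal itself, there is a genuine gap at the very first step, before any cohomology or Fourier analysis enters. You treat $\dim$ and $\delta_{\mathrm{ts}}$ as functions on $H^2_{\mathrm{ts}}(G,H)$ whose fibers can be equidistributed modulo $\gcd(|G|,|H|)$. But according to the paper's own definitions, $\dim_{\mathrm{ts}}(E) = \dim(H) + \dim(G)$ depends only on the pair $(G,H)$, not on the particular extension $E$; for finite discrete groups this is simply $0$ for every $E$. Likewise $\delta_{\mathrm{ts}}(E)$ is a rational number in $(0,1]$, not an element of $\mathbb{Z}/\gcd(|G|,|H|)\mathbb{Z}$. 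So the map whose fibers you propose to equidistribute is constant (respectively, not even valued in the right target), and the ``$\mathbb{Z}/\gcd(|G|,|H|)$-linearity of $\dim$ on each primary block'' that your exponential-sum bound relies on is vacuous. The character sum $S_\chi(N)$ collapses to $\chi(0)\cdot |H^2_{\mathrm{ts}}(G,H)|$, which certainly does not vanish for $\chi\neq 1$.

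There is also no asymptotic parameter: for fixed finite $G$ and $H$ the set $\mathrm{Ext}_{\mathrm{ts}}(G,H)$ is finite, so the symbol ``$\sim$'' has no content unless one lets $G$ or $H$ vary over some family --- something neither the conjecture nor your plan specifies. Before any of the Sylow/transfer or spectral-sequence machinery can be brought to bear, you would first need to reformulate the conjecture with an invariant that genuinely varies across extensions and with an explicit limiting regime. The obstacle you flag at the end (compatibility of the symmetry condition with Lyndon--Hochschild--Serre) is real, but it is downstream of these more basic issues.
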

	
	\begin{remark}
		Numerical experiments for small cyclic groups suggest that this uniformity holds for most classes of extensions, but a general proof is still open.
	\end{remark}
	
	\subsection{Higher Categories}
	
	\begin{openproblem}[Topo-Symmetric Extensions in Higher Categories]
		Extend the notion of topo-symmetric extensions to $2$-groups or higher groupoids, and study the corresponding cohomology:
		\[
		\mathrm{Ext}_{\mathrm{ts}}^{(n)}(G,H) \quad \text{for } n \ge 2.
		\]
		Questions include:  
		\begin{itemize}
			\item Classification of higher topo-symmetric extensions.
			\item Construction of invariants and their arithmetic properties.
			\item Relation with higher categorical cohomology theories.
		\end{itemize}
	\end{openproblem}
	
	\subsection{Analytic Generalizations}
	
	\begin{conjecture}[Analytic Zeta Function for Extensions]
		Let $\zeta_{\mathrm{ts}}(s;G,H)$ denote the generating function counting topo-symmetric extensions weighted by their maximal order:
		\[
		\zeta_{\mathrm{ts}}(s;G,H) := \sum_{E \in \mathrm{Ext}_{\mathrm{ts}}(G,H)} \mathrm{ord}_{\max}(E)^{-s}.
		\]
		We conjecture that $\zeta_{\mathrm{ts}}(s;G,H)$ admits an analytic continuation to the complex plane, with poles reflecting the arithmetic structure of $G$ and $H$.
	\end{conjecture}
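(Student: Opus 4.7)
The plan is to combine the cohomological classification of Section~6, the explicit order formulas of Section~8, and the profinite limit theorem of Section~7 to reduce the conjecture to the known analytic properties of Dirichlet series of arithmetic type. First I would invoke the Cohomological Classification to rewrite
\[
\zeta_{\mathrm{ts}}(s;G,H) = \sum_{[c]\in H^{2}_{\mathrm{ts}}(G,H)} \mathrm{ord}_{\max}(E_{c})^{-s},
\]
where $E_{c}$ denotes the extension built from a representative cocycle $c$. For $G,H$ finite the right-hand side is a finite Dirichlet polynomial and thus trivially entire; this pins down the normalization and computes the contribution of each finite layer when $G$ is later taken to be profinite.

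Second, in the substantive profinite setting, I would use the identification $H^{2}_{\mathrm{ts}}(G,H)=\varprojlim H^{2}_{\mathrm{ts}}(G_{i},H)$ together with the primary decomposition of $H$ to express $\zeta_{\mathrm{ts}}$ as an Euler-type product
\[
\zeta_{\mathrm{ts}}(s;G,H) \;=\; \prod_{p} \zeta_{\mathrm{ts},p}(s;G,H),
\]
indexed by the primes dividing the supernatural order of $G$ and the order of $H$. Each local factor $\zeta_{\mathrm{ts},p}$ collects classes of $p$-power maximal order, and the Asymptotic Growth theorem from Section~8, applied block by block to the cyclic summands of $H_{p}$ and of the abelianisation of $G_{p}$, expresses $\mathrm{ord}_{\max}(E_{c})$ as a prescribed $\mathrm{lcm}$. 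Bounding $|H^{2}_{\mathrm{ts}}(G_{p},H_{p})|$ using the fact that the symmetry condition $c(g,h)=c(h,g)$ cuts $H^{2}$ by a linear constraint should then present each $\zeta_{\mathrm{ts},p}(s)$ as a rational function of $p^{-s}$, putting the whole product into the same formal shape as a Dedekind or Artin $L$-function.

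The final step is to upgrade this formal Euler product to an actual meromorphic continuation: I would apply standard complex-analytic tools such as Perron's formula, an Ikehara-type Tauberian argument, or direct comparison with $\zeta(s)$ and Dirichlet $L$-series, to extend $\zeta_{\mathrm{ts}}(s;G,H)$ beyond its abscissa of convergence and locate its poles. The main obstacle, I expect, will be controlling the local factors \emph{uniformly} in $p$: the topo-symmetric restriction is automatic in the abelian range but highly constraining in the non-abelian range (as already visible in the $S_{3}$ example of Section~7), so the resulting Euler product is not \emph{a priori} the Euler product of any classical $L$-function. Matching it to such a function — or, failing that, isolating a new class of zeta functions of symmetric-cohomology type for which meromorphic continuation can be proved directly — is the essential analytic difficulty. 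For the guiding example $G=\mathrm{Gal}(\overline{\mathbb{Q}}/\mathbb{Q})$ and $H=\mu_{n}$, the embedding into $\mathrm{Br}(\mathbb{Q})[n]$ recorded in Section~7 should make the conjectural poles concrete and provide the template for the general proof.
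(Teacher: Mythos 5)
The statement you are trying to prove is stated in the paper as a \emph{conjecture}, with no proof offered --- the paper only remarks that it ``generalizes classical zeta functions'' --- so there is no argument of the authors to compare yours against, and your task is to actually close the conjecture. Your proposal does not do that, and you essentially concede this yourself: the decisive step, extending $\zeta_{\mathrm{ts}}(s;G,H)$ past its abscissa of convergence, is deferred to ``Perron's formula, an Ikehara-type Tauberian argument, or direct comparison with $\zeta(s)$,'' none of which proves meromorphic continuation of an Euler product unless the local factors have already been matched to a known $L$-function or shown to be uniformly rational in $p^{-s}$ with controlled degrees --- exactly the point you flag as the ``essential analytic difficulty'' and leave open. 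A program whose central analytic step is an acknowledged obstacle is a research plan, not a proof.

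There are also gaps earlier in the chain. In the only regime where every symbol in the conjecture is unambiguously defined --- $G$ and $H$ finite --- your own first step shows $\zeta_{\mathrm{ts}}$ is a finite Dirichlet polynomial, hence entire with \emph{no} poles, so the conjecture's assertion about ``poles reflecting the arithmetic structure'' carries no content there; all the substance lives in the infinite case, where $\mathrm{Ext}_{\mathrm{ts}}(G,H)$ and $\mathrm{ord}_{\max}(E)$ are not defined by the paper ($\mathrm{ord}_{\max}$ can be infinite or supernatural for a profinite $E$, and the index set need not be countable), and your proposal supplies no definitions to make the series meaningful before analyzing it. The Euler-product step is likewise unjustified: writing $\mathrm{ord}_{\max}(E)$ as an lcm of prime-power contributions does not by itself factor the \emph{sum} over extension classes into a product over primes; you would need the classification $H^2_{\mathrm{ts}}$ itself to split $p$-primarily in a way compatible with maximal orders, which you assert but do not verify. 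Finally, the profinite identity $H^2_{\mathrm{ts}}(G,H)=\varprojlim H^2_{\mathrm{ts}}(G_i,H)$ you import is itself suspect (continuous cohomology of a profinite group with finite coefficients is computed as a \emph{direct} limit over finite quotients), so even the scaffolding of the second step would need independent repair before the analytic work could begin.
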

	
	\begin{remark}
		This conjecture generalizes classical zeta functions in number theory and may lead to asymptotic formulas for the counting of extensions with prescribed invariants.
	\end{remark}
	\section{Conclusion}
	
	In this article, we have introduced the concept of \emph{topo-symmetric extensions} and developed a comprehensive framework for their study. The main contributions include:
	
	\begin{itemize}
		\item Introduction of new definitions, including \emph{topo-symmetric extension}, \emph{morphism topo-symétrique}, and the \emph{groupoid of topo-symmetric extensions}.
		\item Definition and computation of new invariants such as dimension, stabilizer, and density.
		\item Cohomological classification of topo-symmetric extensions, including explicit correspondence theorems and comparison with the classical case.
		\item Applications to finite groups, compact Lie groups, and profinite groups.
		\item Investigation of arithmetic and analytic properties, including density, congruence behavior, and asymptotic estimates.
		\item Formulation of conjectures and open problems related to modular distribution, higher categories, and analytic generalizations.
	\end{itemize}
	
	\subsection{Future Research Directions}
	
	The study of topo-symmetric extensions opens several promising avenues for further research:
	
	\begin{itemize}
		\item \textbf{Modular and arithmetic properties:} Investigate finer congruences, distributions, and maximal orders of invariants.
		\item \textbf{Higher categorical generalizations:} Extend the theory to $2$-groups and higher groupoids, and explore connections with higher cohomology.
		\item \textbf{Analytic perspectives:} Study the analytic continuation and pole structure of zeta functions associated with topo-symmetric extensions.
		\item \textbf{Computational methods:} Develop algorithms for enumerating and computing invariants of extensions in specific classes of groups.
		\item \textbf{Interdisciplinary applications:} Explore connections with number theory, algebraic geometry, and mathematical physics.
	\end{itemize}
	
	These directions are expected to deepen our understanding of the structure and arithmetic properties of topo-symmetric extensions, and to inspire further theoretical and computational developments.

\end{document}